\def\Thmname{Theorem}
\def\Propname{Proposition}
\def\Lemmaname{Lemma}
\def\Definitionname{Definition}
\def\SSQ{
\left[%
\begin{array}{cc}
  e & f \\
  h & g \\
\end{array}%
\right] }
\def\GE{\mathcal{G}(E)}
\def\NE{\mathcal{N}(E)}
\newtheorem{Thm}{\Thmname}[section]
\newtheorem{Lem}[Thm]{\Lemmaname}
\newtheorem{Cor}[Thm]{Corollary}
\def\J{\mathrel{{\mathcal J}}} 
\def\R{\mathrel{{\mathcal R}}} 
\def\L{\mathrel{{\mathcal L}}} 
\def\H{\mathrel{{\mathcal H}}} 
\def\e<{\leq _{E}}
\def\l{\mathrel{\leq _{\L}}}
\def\r{\mathrel{\leq _{\R}}}
\def\lb{\mathrel{<_{\L}}}
\def\lb{\mathrel{<_{\L}}}
\font\petite=cmmi10 at 8pt
\def\malce{\mathbin{\hbox{$\bigcirc$\rlap{\kern-9pt\raise0,75pt\hbox{\petite
m}}}}}
\title{Subgroups of free
idempotent generated semigroups need not be free}
\author{Mark Brittenham}
\address{Department of Mathematics \\
University of Nebraska \\
Lincoln, Nebraska 68588-0323 \\
USA} \email{mbrittenham2@math.unl.edu}
\author{Stuart~W.~Margolis}
\address{Department of Mathematics \\
Bar Ilan University \\
52900 Ramat Gan \\
Israel} \email{margolis@math.biu.ac.il}
\author{John Meakin}
\address{Department of Mathematics \\
University of Nebraska \\
Lincoln, Nebraska 68588-0323 \\
USA} \email{jmeakin@math.unl.edu}
\thanks{The first author acknowledges support from NSF Grant
DMS-0306506. The second author acknowledges support from the
Department of Mathematics, University of Nebraska-Lincoln}
\begin{document}

\maketitle

\section{Introduction}

Let $S$ be a semigroup with set $E(S)$ of idempotents, and let
$\langle E(S) \rangle$ denote the subsemigroup of $S$ generated by
$E(S)$. We say that $S$ is an {\em idempotent generated} semigroup
if $S = \langle E(S) \rangle$. Idempotent generated semigroups
have received considerable attention in the literature. For
example, an early result of J. A. Erd\"os \cite{Erdos} proves that
the idempotent generated part of the semigroup of $n \times n$
matrices over a field consists of the identity matrix and all
singular matrices. J. M. Howie \cite{Howfulltr} proved a similar
result for the full transformation monoid on a finite set and also
showed that every semigroup may be embedded in an idempotent
generated semigroup. This result has been extended in many
different ways, and many authors have studied the structure of
idempotent generated semigroups. Recently, Putcha \cite{Putch2}
gave necessary and sufficient conditions for a reductive linear
algebraic monoid to have the property that every non-unit is a
product of idempotents, significantly generalizing the results of
J.A. Erd\"os mentioned above.


In 1979 K.S.S. Nambooripad \cite{Namb} published an influential
paper about the structure of (von Neumann) regular semigroups.
Nambooripad observed that the set $E(S)$ of idempotents of a
semigroup carries a certain structure (the structure of a
``biordered set", or a ``regular biordered set" in the case of
regular semigroups) and he provided an axiomatic characterization
of (regular) biordered sets in his paper. If $E$ is a regular
biordered set, then there is a free object, which we will denote
by $RIG(E)$, in the category of regular idempotent generated
semigroups with biordered set $E$. Nambooripad  showed how to
study $RIG(E)$ via an associated groupoid ${\mathcal N}(E)$. There
is also a free object, which we will denote by $IG(E)$, in the
category of idempotent generated semigroups with biordered set $E$
for an arbitrary (not necessarily regular) biordered set $E$.

In the present paper we provide a topological approach to
Nambooripad's theory by associating a $2$-complex $K(E)$ to each
regular biordered set $E$. The fundamental groupoid of the
$2$-complex $K(E)$ is Nambooripad's groupoid ${\mathcal N}(E)$.
Our concern in this paper is in analyzing the structure of the
maximal subgroups of $IG(E)$ and $RIG(E)$ when $E$ is a regular
biordered set. It has been conjectured that these subgroups are
free \cite{McElw}, and indeed there are several papers in the
literature (see for example, \cite{Pastijn3}, \cite{NP1},
\cite{McElw}) that prove that the maximal subgroups are free for
certain classes of biordered sets. The main result of this paper
is to use these topological tools to give the first example of
non-free maximal subgroups in free idempotent generated semigroups
over a biordered set.
We give an example of a regular biordered set $E$ associated to a
certain combinatorial configuration such that $RIG(E)$ has a
maximal subgroup isomorphic to the free abelian group of rank 2.

\section
{\bf Preliminaries on Biordered Sets and Regular Semigroups}

One obtains significant information about a semigroup by studying
its ideal structure. Recall that if $S$ is a semigroup and $a,b
\in S$ then the Green's relations ${\mathcal R, \mathcal L,
\mathcal H, \mathcal J}$ and $\mathcal D$ are defined by $a
{\mathcal R} b $ if and only if $aS^{1} = bS^{1}$, $a {\mathcal L}
b$ if and only if $S^{1}a = S^{1}b$, $a {\mathcal J} b$ if and
only if $S^{1}aS^{1} = S^{1}bS^{1}$, ${\mathcal H} = {\mathcal R}
\cap {\mathcal L}$ and ${\mathcal D} = {\mathcal R} \circ
{\mathcal L} = {\mathcal L} \circ {\mathcal R}$, so that
${\mathcal D}$ is the join of ${\mathcal R}$ and $ {\mathcal L}$
in the lattice of equivalence relations on $S$. The corresponding
equivalence classes of an element $a \in S$ are denoted by $R_{a},
L_{a}, H_{a}, J_{a}$ and $D_{a}$ respectively. Recall also that
there are quasi-orders defined on $S$ by $a \r b$ if $aS^{1}
\subseteq bS^{1}$, and $a \l b$ if $S^{1}a \subseteq S^{1}b$. As
usual, these induce partial orders on the set of $\R$-classes and
$\L$-classes respectively. The restrictions of these quasi-orders
to $E(S)$ will be denoted by ${\omega}^{r}$ and ${\omega}^{l}$
respectively in this paper, in accord with the notation in
Nambooripad's paper \cite{Namb}. It is easy to see that if $e$ and
$f$ are idempotents of $S$ then $e \, \, {\omega}^{r} \, \, f$
(i.e. $eS \subseteq fS$) if and only if $e = fe$, that $e \, \,
{\omega}^{l} \, \, f$ if and only if $e = ef$, that $e \, \,
{\mathcal R} \, \, f$ if and only if $e = fe$ and $f = ef$, and
that $e \, \, {\mathcal L} \, \, f$ if and only if $e = ef$ and $f
= fe$.

Let $e$ be an idempotent of a semigroup $S$. The set $eSe$ is a
submonoid in $S$ and is the largest submonoid (with respect to
inclusion) whose identity element is $e$. The group of units $G_e$
of $eSe$, that is the group of elements of $eSe$ that have two
sided inverses with respect to $e$, is the largest subgroup of $S$
(with respect to inclusion) whose identity is $e$ and is called
the maximal subgroup of $S$ at $e$.

Recall also that if $e$ and $f$ are idempotents of $S$ then the
natural partial order on $E(S)$ is defined by $e \,\, {\omega}
\,\,  f$ if and only if $ef = fe = e$. Thus $\omega=\omega^{r}
\cap \omega^{l}$. An element $a \in S$ is called {\it regular} if
$a \in aSa$: in that case there is at least one {\it inverse} of
$a$, i.e. an element $b$ such that $a = aba$ and $b = bab$. Note
that regular semigroups have in general many idempotents: if $a$
and $b$ are inverses of each other, then $ab$ and $ba$ are both
idempotents (in general distinct). Standard examples of regular
semigroups are the semigroup of all transformations on a set (with
respect to composition of functions) and the semigroup of all $n
\times n$ matrices over a field (with respect to matrix
multiplication).

We recall the basic properties of the very important class of
completely 0-simple semigroup. A semigroup $S$ (with 0) is
(0)-simple if ($S^{2}\neq 0$ and) its only ideal is $S$ ($S$ and
{0}). A (0)-simple semigroup $S$ is completely (0)-semigroup if
$S$ contains an idempotent and every idempotent is (0)-minimal in
the natural partial order of idempotents defined above. It is a
fundamental fact that every finite (0)-simple semigroup is
completely (0)-simple.

Let $S$ be a completely 0-simple semigroup. The Rees theorem
\cite{CP,Lall} states that $S$ is isomorphic to a regular Rees
matrix semigroup $M^{0}(A,G,B,C)$ and conversely that every such
semigroup is completely (0)-simple. Here $A (B)$ is an index set
for the $\R (\L)$-classes of the non-zero $\J$-class of $S$ and
$C:B \times A\rightarrow G^{0}$ is a function called the structure
matrix. $C$ has the property that for each $a \in A$ there is a $b
\in B$ such that $C(b,a) \neq 0$ and for each $b \in B$ there is
an $a \in A$ such that $C(b,a) \neq 0$. We always assume that $A$
and $B$ are disjoint. The underlying set of $M^{0}(A,G,B,C)$ is $A
\times G \times B \cup \{0\}$ and the product is given by
$(a,g,b)(a',g',b')=(a,gC(b,a')g',b')$ if $C(b,a')\neq 0$ and 0
otherwise.

We refer the reader to the books of Clifford and Preston \cite{CP}
or Lallement \cite{Lall} for standard ideas and notation about
semigroup theory.

An {\it $E$-path} in a semigroup $S$ is a sequence of idempotents
$(e_{1},e_{2}, \ldots , e_{n})$ of $S$ such that $e_{i} \, \,
({\mathcal R} \cup {\mathcal L}) \, \, e_{i+1}$ for all $i = 1,
\ldots n-1$. This is just a path in the graph $(E, {\mathcal R}
\cup {\mathcal L})$: the set of vertices of this graph is the set
$E$ of idempotents of $S$ and there is an edge denoted $(e,f)$
from $e$ to $f$ for $e,f \in E$ if $e {\mathcal R} f$ or $e
{\mathcal L} f$. One can introduce an equivalence relation on the
set of $E$-paths by adding or removing ``inessential" vertices: a
vertex (idempotent) $e_{i}$ of a path $(e_{1}, e_{2} \ldots ,
e_{n})$ is called {\it inessential} if $e_{i-1} \, \, {\mathcal R}
\, \, e_{i} \, \, {\mathcal R} \, \, e_{i+1}$ or $e_{i-1} \, \,
{\mathcal L} \, \, e_{i} \, \, {\mathcal L} \, \, e_{i+1}$.
Following Nambooripad \cite{Namb}, we define an {\it $E$-chain} to
be the equivalence class of an $E$-path relative to this
equivalence relation. It can be proved \cite{Namb} that each
$E$-chain has a unique canonical representative of the form
$(e_{1}, e_{2}, \ldots , e_{n})$ where every vertex is essential.
We will often abuse notation slightly by identifying an $E$-chain
with its canonical representative.

The set ${\mathcal G}(E)$ of $E$-chains forms a {\it groupoid}
with set $E$ of objects (identities) and with an $E$-chain
$(e_{1},e_{2}, \ldots , e_{n})$ viewed as a morphism from $e_{1}$
to $e_{n}$. The product $C_{1}C_{2}$ of two $E$-chains $C_{1} =
(e_{1},e_{2}, \ldots , e_{n})$ and $C_{2} = (f_{1},f_{2}, \ldots,
f_{m})$ is defined and equal to the canonical representative of
$(e_{1}, \ldots , e_{n}, f_{1}, \ldots f_{m})$ if and only if
$e_{n} = f_{1}$: the inverse of $(e_{1},e_{2}, \ldots , e_{n})$ is
$(e_{n}, \ldots , e_{2},e_{1})$. We refer the reader to
\cite{Namb} for more detail.

For future reference we give a universal characterization of
${\mathcal G}(E)$ in the category of small groupoids. Every
equivalence relation $R$ on a set $X$ can be considered to be a
groupoid with objects $X$ and arrows the ordered pairs of $R$.
There are obvious notions of free products and free products with
amalgamations in the category of small groupoids. See
\cite{Higgoids} for details. Clearly the objects of any groupoid
form a subgroupoid whose morphisms are the identities. We will
identify the objects of a groupoid as this subgroupoid and call it
the trivial subgroupoid. The proof of the following theorem
appears in \cite{Namb}.

\begin{Thm}\label{amalg}
Let $S$ be a semigroup with non-empty set of idempotents $E$. Then
${\mathcal G}(E)$ is isomorphic to the free product with
amalgamation $\L\ast_{E}\R$ in the category of small groupoids.
\end{Thm}

As mentioned above, we are considering $E$ to be the trivial
subgroupoid of $\GE$.


It is easy to see from the characterizations of ${\mathcal R}$ and
$\mathcal L$ above that if $(f_{1},f_{2}, \ldots ,f_{m})$ is the
canonical representative equivalent to an $E$-path
\\ $(e_{1},e_{2},\ldots , e_{n})$, then $e_{1}e_{2} \ldots e_{n} =
f_{1}f_{2} \ldots f_{m}$ in $S$, since $efg = eg$ if $e {\mathcal R}
f  {\mathcal R}  g$ or $e  {\mathcal L}  f  {\mathcal L} g$.
Standard results of Miller and Clifford \cite{CP} imply that
\\$e_{1}  {\mathcal R} e_{1}e_{2} \ldots e_{n}  {\mathcal L} \,
\, e_{n}.$

In 1972, D.G. Fitzgerald \cite{FitzG} proved the following basic
result about the idempotent generated subsemigroup of any
semigroup.

\begin{Thm}\label{fitz}
Let $S$ be any semigroup with non-empty set $E = E(S)$ of
idempotents and let $x$ be a regular element of $\langle E(S)
\rangle$. Then $x$ can be expressed as a product of idempotents $x
= e_{1}e_{2} \ldots e_{n}$ in an $E$-path \\ $(e_{1}, e_{2},
\ldots , e_{n})$ of $S$, and hence as a product of idempotents in
an $E$-chain. If $S$ is regular, then so is $<E(S)>$.
\end{Thm}

In 1979, Nambooripad introduced the notion of a {\it biordered
set} as an abstract characterization of the set of idempotents $E$
of a semigroup $S$ with respect to certain basic products that are
forced to be idempotents. We give the details that will be needed
in this paper.


Recall that if $e,f \in E=E(S)$ for some semigroup $S$ then  $e \,
\, {\omega}^{r} \, \, f$ if and only if $fe = e$, and $e \, \,
{\omega}^{l} \, \, f$ if and only if $ef = e$. In the former case,
$ef$ is an idempotent that is $\mathcal R$ -related to $e$ and $ef
\,\, {\omega} \,\, f$ in the natural order on $E$: similarly, in
the latter case, $fe$ is an idempotent that is $\mathcal
L$-related to $e$ and $fe \,\, {\omega} \,\, f$. Thus in each case
both products $ef$ and $fe$ are defined within $E$, i.e. such
products of idempotents must always be idempotent. Products of
these type are referred to as {\it basic products}. The partial
algebra $E$ with multiplication restricted to basic products is
called the {\it biordered set} of $S$.

Nambooripad \cite{Namb} characterized the partial algebra of
idempotents of a (regular) semigroup with respect to these basic
products axiomatically. We refer the reader to Nambooripad's article
\cite{Namb} for the details. The axioms are complicated but do arise
naturally in mathematics. For example, Putcha proved that pairs of
opposite parabolic subgroups of a finite group of Lie type have the
natural structure of a biordered set \cite{Putchbook}. We will need
one more concept, the sandwich set $S(e,f)$ of two idempotents $e,f$
of $S$.

If $e,f$ are (not necessarily distinct) idempotents of a semigroup
$S$, then $S(e,f)=\{h \in E|ehf=ef, fhe=h\}$ is called the
sandwich set of $e$ and $f$ (in that order). It is straightforward
to prove that if $h \in S(e,f)$, then $h$ is an inverse of $ef$.
In particular, $S(e,f)$ is non-empty for any $e,f$ if $S$ is a
regular semigroup. Nambooripad also gave an order theoretic
definition of the sandwich set, but we will not need that in this
paper.

As mentioned above, Nambooripad gave a definition of a biordered
set as a partial algebra satisfying a collection of axioms. We
don't need the details of these axioms because of the following
theorems. He called a biordered set {\it regular} if the
(axiomatically defined) sandwich set of any pair of idempotents is
non-empty.

\begin{Thm} (Nambooripad \cite{Namb})
The set $E$ of idempotents of a regular semigroup is a regular
biordered set relative to the basic products in $E$. Conversely,
every regular (axiomatically defined) biordered set arises as the
biordered set of idempotents of some regular semigroup.
\end{Thm}

This was extended to non-regular semigroups and non-regular
biordered sets by Easdown \cite{Eas1}. We will give a more precise
statement of Easdown's result in the next section.

\section
{\bf Free idempotent generated semigroups on biordered sets}

If $E$ is a biordered set we denote by $IG(E)$ the semigroup with
presentation

\medskip

$IG(E) = \langle E: e^{2} = e$ for all $e \in E$ and $e.f = ef$ if
$ef$ is a basic product in $E \rangle $.

\medskip

If $E$ is a regular biordered set, then we define

\medskip

$RIG(E) = \langle E: e^{2} = e$ for all $e \in E$ and $e.f = ef$
if $ef$ is a basic product in $E$ and $ef = ehf$ for all $e,f \in
E$ and $h \in S(e,f) \rangle$

\medskip

The semigroup $IG(E)$ is called the {\it free idempotent generated
semigroup on $E$} and the semigroup $RIG(E)$ is called the {\it
free regular idempotent generated semigroup on $E$}. This
terminology is justified by the following results of Easdown
\cite{Eas1}, Nambooripad \cite{Namb} and Pastijn \cite{Past2}.

\begin{Thm}\cite{Eas1}\label{Eas}
The biordered set of idempotents of $IG(E)$ is $E$. In particular,
every biordered set is the biordered set of some semigroup. If $S$
is any idempotent generated semigroup with biordered set of
idempotents isomorphic to $E$ then the natural map $E \rightarrow
S$ extends uniquely to a homomorphism $IG(E) \rightarrow S$.
\end{Thm}

\begin{Thm}\cite{Namb, Past2}\label{Nam}
If $E$ is a regular biordered set then $RIG(E)$ is a regular
semigroup with biordered set of idempotents $E$. If $S$ is any
regular idempotent generated semigroup with biordered set biorder
isomorphic to $E$, then the natural map $E \rightarrow S$ extends
uniquely to a homomorphism $RIG(E) \rightarrow S$.
\end{Thm}

There is an obvious natural morphism ${\phi} : IG(E) \rightarrow
RIG(E)$ if $E$ is a regular biordered set. However, we remark that
this is not an isomorphism, and the semigroups $IG(E)$ and
$RIG(E)$ can be very different when $E$ is a regular biordered
set. Also, the regular elements of $IG(E)$ do not form a
subsemigroup in general, even if $E$ is a regular biordered set.

The following simple examples illustrate these facts.

\medskip

\noindent {\bf Example 1.} Let $E$ be the (non-regular) biordered
set consisting of two idempotents $e$ and $f$ with trivial
quasi-orders ${\omega}^r$ and ${\omega}^l$. Clearly the rules
$e^{2} \rightarrow e, f^{2} \rightarrow f$ constitute a
terminating confluent rewrite system for the semigroup $IG(E)$.
Canonical forms for words in $IG(E)$ are of the form $efef \ldots
e$ or $efef \ldots f$ or $fe fe \ldots f$ or $fefe \ldots e$.
Clearly $IG(E)$ is an infinite semigroup with exactly two
idempotents ($e$ and $f$).

\noindent {\bf Example 2.} Let $F$ be the biordered set $E$ above
with a zero $0$ adjoined. Thus $F$ is a three-element semilattice,
freely generated as a semilattice by $e$ and $f$. It is easy to
see that $RIG(F) = F$ since $ef = e0f = fe = f0e = 0$ from the
presentation for $RIG(F)$ and since $0\in S(e,f)$. But $IG(F)$ is
$IG(E)^{0}$, where $IG(E)$ is the semigroup in Example 1. Thus
$IG(F)$ is infinite, but $RIG(F)$ is finite.

\medskip
We will give more information about the relationship between
$IG(E)$ and $RIG(E)$, for $E$ a regular biordered set, at the end
of this section. In particular, we will show that the regular
elements of $IG(E)$ are in one-one correspondence with the
elements of $RIG(E)$ (even though the regular elements of $IG(E)$
do not necessarily form a subsemigroup of $IG(E)$).

Nambooripad studied the free regular idempotent generated
semigroup semigroup $RIG(E)$ on a regular biordered set via his
general theory of ``inductive groupoids" in \cite{Namb}. If $S$ is
a regular semigroup, then Nambooripad introduced an associated
groupoid ${\mathcal N}(S)$ (that we refer to as the {\it
Nambooripad groupoid of $S$)} as follows. The set of objects of
${\mathcal N}(S)$ is the set $E = E(S)$ of idempotents of $S$. The
morphisms of ${\mathcal N}(S)$ are of the form $(x,x')$ where $x'$
is an inverse of $x$: $(x,x')$ is viewed as a morphism from $xx'$
to $x'x$ and the composition of morphisms is defined by
$(x,x')(y,y') = (xy,y'x')$ if $x'x = yy'$ (and undefined
otherwise). With respect to this product, ${\mathcal N}(S)$
becomes a groupoid, which in fact is endowed with much additional
structure, making it an {\it inductive groupoid} in the sense of
Nambooripad \cite{Namb}. An inductive groupoid is an ordered
groupoid whose identities (objects) admit the structure of a
regular biordered set $E$, and which admits a way of evaluating
products of idempotents in an $E$-chain as elements of the
groupoid.  There is an equivalence between the category of regular
semigroups and the category of inductive groupoids. We refer the
reader to Nambooripad's paper \cite{Namb} for much more detail. In
particular, it follows easily from Nambooripad's results that the
maximal subgroup of $S$ containing the idempotent $e$ is
isomorphic to the local group of ${\mathcal N}(S)$ based at the
object (identity) $e$ (i.e. the group of all morphisms from $e$ to
$e$ in ${\mathcal N}(S)$).

In his paper \cite{Namb}, Nambooripad also showed how to construct
the inductive groupoid ${\mathcal N}(RIG(E))$ associated with the
free regular idempotent generated semigroup on a regular biordered
set $E$ directly from the groupoid of $E$-chains of $E$. We review
this construction here.

Let $E$ be a regular biordered set. An {\it $E$-square} is an $E$
-path $(e,f,g,h,e)$ with $e\R f \L g \R h \L e$ or $(e,h,g,f,e)$
with $e \L h \R g \L f \R e$. We draw the square as: $
\left[%
\begin{array}{cc}
  e & f \\
  h & g \\
\end{array}%
\right] $. An $E$-square is degenerate if it is of one of the
following three types:
\begin{center}
$
\left[%
\begin{array}{cc}
  e & e \\
  e & e \\
\end{array}%
\right]$  $\left[%
\begin{array}{cc}
  e & f \\
  e & f \\
\end{array}%
\right] $
$\left[%
\begin{array}{cc}
  e & e \\
  f & f \\
\end{array}%
\right] $

Unless mentioned otherwise, all $E$-squares will be
non-degenerate.
\end{center}

An idempotent $t=t^2 \in E$ {\em left to right singularizes} the
$E$-square $
\left[%
\begin{array}{cc}
  e & f \\
  h & g \\
\end{array}%
\right] $ if $te=e, th=h, et=f$ and $ht=g$ where all of these
products are defined in the biordered set $E$. Right to left, top
to bottom and bottom to top singularization is defined similarly
and we call the $E$-square {\em singular} if it has  a
singularizing idempotent of one of these types. Note that since
$te=e \in E$ if and only if $e\omega^{r}t$, all of these products
can also be defined in terms of the order structure as well.

The importance of singular $E$-squares is given by the next lemma.

\begin{Lem}\label{Trivsing} Let $\SSQ$ be a singular $E$-square in
a semigroup $S$.Then the product of the elements in the $E$-cycle
$(e,f,g,h,e)$ satisfies $efghe=e$.
\end{Lem}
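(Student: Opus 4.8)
The plan is to reduce the claim to a direct computation using the defining conditions of a singularizing idempotent, handling each of the four types of singularity in turn (they are symmetric, so one case carries the rest). Suppose first that $t$ left to right singularizes the $E$-square $\SSQ$, so that $te=e$, $th=h$, $et=f$ and $ht=g$. The goal is to show $efghe=e$ in $S$. First I would substitute the singularization relations to rewrite the product $efghe$ entirely in terms of $e$, $h$ and $t$: using $f=et$ and $g=ht$ we get
\[
efghe = e(et)(ht)(h)e = e\,et\,ht\,h\,e.
\]
Since $e$ is idempotent, $e\cdot et = et$, so this collapses to $et\,ht\,he$. The next step is to exploit the biorder relations among $e,f,g,h$: from $e\R f\L g\R h\L e$ (the defining shape of the $E$-square) one reads off idempotent equalities such as $fe=e$ (from $e\R f$, i.e. $e\,\omega^r f$ gives $fe=e$) and $eh=e$ or $he=h$ from the $\L$-relation $h\L e$. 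I would use exactly these Green's-relation identities, together with $te=e$ and $th=h$, to telescope the middle of the word.

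The key mechanism is that each adjacent pair in the cycle satisfies the collapsing rule quoted earlier in the paper, namely $efg=eg$ whenever $e\R f\R g$ or $e\L f\L g$, and more basically that $\mathcal R$- and $\mathcal L$-related idempotents act as one-sided identities on each other. Concretely, since $h\L e$ we have $he=h$, and since $e\R f$ we have $fe=e$ and $ef=f$. Carrying the factor $t$ through, the relations $te=e$, $th=h$ let me absorb $t$ on the left, while $et=f$, $ht=g$ let me reintroduce it on the right, so the whole expression should cascade down to $e$. I expect the cleanest route is to show $efghe = ethe$ (pulling the two middle $t$'s together using $te=e$ once the $\L$/$\R$ identities have been applied), and then $ethe=ehe=e$ using $th=h$ and $he=h$, $eh=\cdots$ as dictated by $h\L e$.

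The remaining three cases—right to left, top to bottom, and bottom to top—are proved by the identical manipulation after relabeling, or more cleanly by appealing to the symmetry of the $E$-square (transposing or reflecting the square exchanges the roles of the pairs $(e,f)$ and $(h,g)$, or of the two columns, and sends one singularization type to another), together with the observation that the cyclic product $efghe$ is invariant, up to the same cyclic reading, under these symmetries. The main obstacle I anticipate is bookkeeping: one must be careful which of the two orientations $e\R f\L g\R h\L e$ versus $e\L h\R g\L f\R e$ is in force, and correspondingly whether a given singularization condition yields a left identity or a right identity in $S$. The algebra itself is a short chain of substitutions, so once the correct Green's-relation identities ($fe=e$, $he=h$, $ef=f$, and their $t$-twisted analogues $et=f$, $ht=g$) are lined up, the collapse $efghe=e$ is forced. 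The delicate point is simply ensuring that each cancellation is justified by a genuine biorder identity rather than an unwarranted commutation.
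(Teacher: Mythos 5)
Your proposal is correct and takes essentially the same approach as the paper: the paper likewise treats the left-to-right case by direct substitution, first reducing $efghe=fh$ via the basic ${\mathcal R}$/${\mathcal L}$ identities and then computing $fh=eth=eh=e$ from the singularization relations, with the other three cases handled ``similarly,'' exactly as you propose. One small bookkeeping correction: in collapsing $e\,t\,h\,t\,h\,e$ to $ethe$ the relations actually doing the work are $th=h$ and $hh=h$, not $te=e$; since these are available, your computation goes through unchanged.
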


\begin{proof}

Let $t=t^2$ left to right singularize the $E$-square $\SSQ$. Then
in any idempotent generated semigroup with biordered set $E$,
$efghe=fh$ follows from the basic $\R$ and $\L$ relations of $E$.
Furthermore, $fh=eth=eh=e$ which follows from the definition of
left to right singularization. The other cases of singularization
are proved similarly.
\end{proof}

In order to build the inductive groupoid of $RIG(E)$, we must
therefore identify any singular $E$-cycle of $\GE$ from an
idempotent $e$ to itself with $e$. This is because any inductive
groupoid with biordered set $E$ is an image of $\GE$ by
Nambooripad's theory \cite{Namb}. This leads to the following
definition. For two $E$-chains $C = (e_{1},e_{2}, \ldots , e_{n})$
and $C' = (f_{1},f_{2}, \ldots , f_{m})$ define $C \rightarrow C'$
if there are $E$-chains $C_{1}$ and $C_{2}$ and a singular
$E$-square $\gamma$ such that $C = C_{1}C_{2}$ and $C' =
C_{1}{\gamma}C_{2}$ and let $\sim$ denote the equivalence relation
on ${\mathcal G}(E)$ induced by $\rightarrow$. The next theorem
follows from \cite{Namb} Theorem 6.9, 6.10 and ensures that the
quotient groupoid ${\mathcal G}(E)/{\sim} $ defined above has an
inductive structure and is isomorphic to the inductive groupoid of
$RIG(E)$.

\begin{Thm}(Nambooripad \cite{Namb}) \label{N(E)}
If $E$ is a regular biordered set, then $ {\mathcal N}(RIG(E))
\cong {\mathcal G}(E)/{\sim} $.
\end{Thm}

It is convenient to provide a topological interpretation of this
theorem of Nambooripad. We remind the reader that just as groups
are presented by a set of generators and a set of words over the
generating set as relators (giving the group as a quotient of the
free group on the generating set), groupoids are presented by a
graph and a set of cycles in the graph as relators (giving the
groupoid as a quotient of the free groupoid on the graph). See
\cite{Higgoids} for more details.

It follows from Theorem \ref{amalg} and Theorem \ref{N(E)} that we
have the following presentation for $ {\mathcal N}(RIG(E)) \cong
{\mathcal G}(E)/{\sim} $.

{\bf Generators:} The graph with vertices $E$ and edges the
relation $\R \cup \L$.

{\bf Relators:} There are two types of relators:

\begin{enumerate}
    \item {$((e,f),(f,g),(g,e))=1_e$ if $e\R f\R g$ or $e \L f \L g$}
    \item {$((e,f),(f,g),(g,h),(h,e))=1_e$ if $\SSQ$ is a singular
$E$-square}.
\end{enumerate}

We will always assume that there are no trivial relators in the
list above. This means that for relators of type (1) all three
elements $e,f,g$ are distinct and for relators of type (2), all
four elements $e,f,g,h$ are distinct.

 If $E$ is a regular biordered set  we associate a 2-complex
$K(E)$ which is the analogue of the presentation complex of a
group presentation. The $1$-skeleton of $K(E)$ is the graph
$(E,{\mathcal R} \cup {\mathcal L})$ described above. Since $\R$
and $\L$ are symmetric relations we consider the underlying graph
to be undirected in the usual way. The $2$-cells of $K(E)$ are of
the following types:

(1) if $e \, \, {\mathcal R} \, \, f \, \, {\mathcal R} \, \, g$
or $e \, \, {\mathcal L} \, \, f \, \, {\mathcal L} \, \, g$ for
$e,f,g \in E$ then there is a $2$-cell with boundary edges $(e,f),
(f,g), (g,e)$.

(2) all singular $E$-squares bound $2$-cells.

We note that our 2-complexes are combinatorial objects and we follow
the notation of \cite{Rotman}, \cite{Spanier}.

We denote the fundamental groupoid of a $2$-complex $K$ by
${\pi}_{1}(K)$: the fundamental group of $K$ based at $v$ will be
denoted by ${\pi}_{1}(K,v)$. The following corollary is an
immediate consequence of Nambooripad's work and the definition of
the fundamental groupoid of a $2$-complex (see, for example,
\cite{Higgoids}).

\begin{Cor} If $E$ is a regular biordered set, then
${\pi_{1}}(K(E)) \cong {\mathcal G}(E)/{\sim} $ and hence
${\pi_{1}}(K(E)) \cong {\mathcal N}(RIG(E))$.
\end{Cor}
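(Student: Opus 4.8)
The plan is to show that the two stated isomorphisms follow by directly matching presentations. By Theorem~\ref{N(E)} we already know $\mathcal{N}(RIG(E)) \cong \mathcal{G}(E)/{\sim}$, so the content of the corollary is really the first isomorphism $\pi_1(K(E)) \cong \mathcal{G}(E)/{\sim}$; the second then follows by composing with the isomorphism of Theorem~\ref{N(E)}. So the whole task reduces to identifying the fundamental groupoid of the $2$-complex $K(E)$ with the quotient groupoid $\mathcal{G}(E)/{\sim}$.

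First I would recall the standard combinatorial description of the fundamental groupoid of a $2$-complex (as in \cite{Higgoids}): $\pi_1(K(E))$ is the free groupoid on the $1$-skeleton graph of $K(E)$, modulo the normal subgroupoid generated by the boundary cycles of the $2$-cells. The $1$-skeleton of $K(E)$ is by construction precisely the graph $(E, \mathcal{R} \cup \mathcal{L})$, which is exactly the generating graph appearing in the presentation of $\mathcal{N}(RIG(E)) \cong \mathcal{G}(E)/{\sim}$ displayed just before the corollary. Next I would compare the two sets of defining relators. The boundary $2$-cells of $K(E)$ are of exactly two types: the triangular cells with boundary $(e,f),(f,g),(g,e)$ when $e\,\mathcal{R}\,f\,\mathcal{R}\,g$ or $e\,\mathcal{L}\,f\,\mathcal{L}\,g$, and the square cells bounded by singular $E$-squares. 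These correspond term-for-term to the type~(1) and type~(2) relators in the groupoid presentation of $\mathcal{G}(E)/{\sim}$. Since the free groupoid on a graph is by definition the fundamental groupoid of the corresponding $1$-complex, and attaching a $2$-cell along a loop imposes exactly the relator that kills that loop, the presentation of $\pi_1(K(E))$ coincides with the presentation of $\mathcal{G}(E)/{\sim}$, giving the desired isomorphism.

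The only point requiring a little care is bookkeeping about orientation and basepoints: the graph $(E,\mathcal{R}\cup\mathcal{L})$ is treated as undirected (as noted in the text, since $\mathcal{R}$ and $\mathcal{L}$ are symmetric), so each edge $(e,f)$ has a formal inverse $(f,e)$ in the free groupoid, and one must check that this matches the inverse convention for $E$-chains (where the inverse of $(e_1,\ldots,e_n)$ is $(e_n,\ldots,e_1)$). Likewise one checks that the triangular relator $((e,f),(f,g),(g,e)) = 1_e$ corresponds to the groupoid relation that absorbs the inessential middle vertex in an $E$-chain, which is exactly the equivalence built into the definition of $\mathcal{G}(E)$. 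The main (and essentially only) obstacle is therefore verifying that these identifications are internally consistent rather than proving anything genuinely new; once the dictionary between $2$-cells and relators is in place, the result is immediate from the cited description of the fundamental groupoid. I would conclude by composing with Theorem~\ref{N(E)} to obtain $\pi_1(K(E)) \cong \mathcal{N}(RIG(E))$.
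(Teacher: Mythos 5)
Your proposal is correct and matches the paper's own treatment: the paper states the corollary as an immediate consequence of the displayed groupoid presentation of $\mathcal{G}(E)/{\sim}$ together with the standard description of the fundamental groupoid of a $2$-complex (generators the $1$-skeleton, relators the boundary cycles of the $2$-cells), which is precisely the presentation-matching argument you give. Your additional bookkeeping about inverses and the triangular relators absorbing inessential vertices is a sound elaboration of the same dictionary, not a different route.
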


It follows that the maximal subgroup of $RIG(E)$ containing the
idempotent $e$ is isomorphic to the fundamental group of $K(E)$
based at $e$. The next theorem shows that there is a one to one
correspondence between regular elements of $IG(E)$ and $RIG(E)$ if
$E$ is a regular biordered set and that for every $e \in E$, the
maximal subgroup at $e$ in $IG(E)$ is isomorphic to the maximal
subgroup at $e$ in $RIG(E)$.

\begin{Thm}\label{IG=RIG}
Let $E$ be a regular biordered set. Then the natural map ${\phi} :
IG(E) \rightarrow RIG(E)$ is a bijection when restricted to the
regular elements of IG(E). That is, for each element $r \in
RIG(E)$ there exists a unique regular element $s \in IG(E)$ such
that ${\phi}(s) = r$. In particular, the maximal subgroups of
$IG(E)$ and $RIG(E)$ are isomorphic.
\end{Thm}

\begin{proof}

It follows from Fitzgerald's theorem, Theorem \ref{fitz} that every
element of $RIG(E)$ is the product of the elements in an $E$ chain.
But it follows from the Clifford-Miller theorem \cite{CP} that the
product of an element in an $E$-chain is a regular element in any
idempotent generated semigroup with biordered set $E$. It follows
immediately that $\phi$ restricts to a surjective map from the
regular elements of $IG(E)$ to $RIG(E)$.

If $u$ and $v$ are regular elements of $IG(E)$, then there are
$E$-chains \\
 $(e_{1},e_{2}, \ldots , e_{n})$ and $(f_{1},f_{2},
\ldots , f_{m})$ such that $u = e_{1}e_{2} \ldots e_n$ and \\
$v =
f_{1}f_{2} \ldots f_{m}$ in $IG(E)$. Suppose that ${\phi}(u) =
{\phi}(v)$. Clearly, on applying the morphism $\phi$, $e_{1}e_{2}
\ldots e_{n} = f_{1}f_{2} \ldots f_{m}$ in $RIG(E)$. We mentioned
previously that it follows from the Clifford-Miller theorem
\cite{CP} that $e_{1} {\mathcal R} f_{1}$ and $ e_{n} {\mathcal L}
f_{m}$. Thus without loss of generality, we may assume that $e_{1} =
f_{1}$ since $e_{1}f_{1}f_{2} \ldots f_{m} = f_{1}f_{2} \ldots
f_{m}$ in $IG(E)$, and similarly we may assume that $e_{n} = f_{m}$.
Applying \cite{Namb} Lemma 4.11 and Theorem \ref{N(E)}, it follows
that $(e_{1},e_{2}, \ldots ,e_{n}) \sim (f_{1}, f_{2}, \ldots
,f_{m})$. Thus it is possible to pass from $(e_{1},e_{2}, \ldots
,e_{n})$ to $(f_{1}, f_{2}, \ldots ,f_{m})$ by a sequence of
operations of two types:

(a) inserting or deleting paths of length 3 corresponding to $\R$
or $\L$ related idempotents; and

(b) inserting or deleting $E$-cycles corresponding to singular
$E$-squares.

Note that if $(e,f,g,h,e)$ is a singular $E$-square then $efghe =
e$ in any semigroup $S$ with biordered set $E$ by Lemma
\ref{Trivsing}. It follows easily that if $(g_{1},g_{2}, \ldots
,g_{p})$ is obtained from $(e_{1},e_{2}, \ldots ,e_{n})$ by one
application of an operation of type (a) or (b) above, then
$e_{1}e_{2} \ldots e_{n} = g_{1}g_{2} \ldots g_{p}$ in any
semigroup with biordered set $E$, and in particular this is true
in $IG(E)$. It follows by induction on the number of steps of
types (a) and (b) needed to pass from $(e_{1},e_{2}, \ldots
,e_{n})$ to $(f_{1},f_{2}, \ldots ,f_{m})$ that $u = e_{1}e_{2}
\ldots e_{n} = f_{1}f_{2} \ldots f_{m} = v$ in $IG(E)$, so $\phi$
is one-to-one on regular elements, as desired.

To prove the final statement of the theorem, note that elements of
the maximal subgroup of $IG(E)$ or $RIG(E)$ containing $e$ come
from
$E$-chains that start and end at $e$, since \\
$e_{1} \, \, {\mathcal R} \, \, e_{1}e_{2} \ldots e_{n} \, \,
{\mathcal L} \, \, e_{n}$ for any $E$-chain $(e_{1},e_{2}, \ldots
, e_{n})$. This shows that the map ${\phi}$ is surjective on
maximal subgroups: the first part of the theorem shows that it is
injective on maximal subgroups.

\end{proof}

\section{Connections between the Nambooripad Complex and the
Graham-Houghton Complex}

In this section we use the Bass-Serre theoretic methods of
\cite{HMM} to study the local groups of $\GE$ and $\NE$. The local
group of a groupoid $G$ at the object $v$ is the group of self
morphisms $G(v,v)$. For $\GE$ we give a rapid topological proof of
a result of Namboopripad and Pastijn \cite{NP1} who showed that
the local groups of $\GE$ are free groups. By applying \cite{HMM}
we are lead directly to the graphs considered by Graham and
Houghton \cite{Gr, Hough} for studying completely 0-simple
semigroups. We put a structure of a complex on top of the
Graham-Houghton graphs in order to have tools to study the vertex
subgroups of $\NE$, which by Theorem \ref{N(E)} and Theorem
\ref{IG=RIG} are the maximal subgroups of $IG(E)$ and $RIG(E)$
when $E$ is a regular biordered set.

Throughout this section, $E$ will denote a regular biordered set.
By Theorem \ref{Nam} $E$ is isomorphic to the biordered set of
idempotents of $RIG(E)$ and we will use this identification
throughout the section as well. Thus, we will refer to the
elements of $E$ as idempotents and talk about their Green classes
within $RIG(E)$. We have seen in Theorem \ref{amalg} that $\GE$
decomposes as the free product with amalgamation $\GE=\L \ast_{E}
\R$, where by abuse of notation, $E$ denotes the trivial
subgroupoid. Since $\L$ and $\R$ also have the same objects as
each other and as $E$, we can use the methods of \cite{HMM} to
study the maximal subgroup of $\GE$, since this paper was
concerned with amalgams of groupoids in which the intersection of
the two factors contains all the identity elements.

For every such amalgam of groupoids $G=A\ast_{U}B$, \cite{HMM}
associates a graph of groups in the sense of Bass-Serre Theory
\cite{Serre} whose connected components are in one to one
correspondence with the connected components of $G$ and such that
the fundamental group of a connected component is isomorphic to
the local group of the corresponding component of $G$.

First note that there is a one to one correspondence between the
$\L (\R)$ classes of $E$ and the $\L (\R)$ classes of $RIG(E)$.
This is because every $\L (\R)$ class of $RIG(E)$ has an
idempotent and the $\L (\R)$ relation restricted to idempotents
can be defined by basic products. We abuse notation by identifying
an $\L (\R)$ class of $E$ with the $\L (\R)$ class of $RIG(E)$
containing it.

We now describe explicitly the graph of groups associated to
$\GE$. For more details, see \cite{HMM}. The graph of groups
$\textsf{G}$ of $\GE$ consists of the following data: The set of
vertices is the disjoint union of the $\L$ and $\R$ classes of $E$
and its positive edges are the elements of $E$. If $e \in E$, its
initial edge is its $\L$-class and its terminal edge is its
$\R$-class. That is, there is a unique positive edge from an
$\L$-class $L$ to an $\R$-class $R$ if and only if the $\H$-class
$L \cap R$ of $RIG(E)$ contains an idempotent. Each vertex group
of $\textsf{G}$ is the trivial group. This is an exact translation
for $\GE$ of the graph of groups defined for an arbitrary amalgam
on page 46 of \cite{HMM}.

Since the vertex groups of $\textsf{G}$ are trivial, we can consider
$\textsf{G}$ to be a graph in the usual sense. Therefore its
fundamental group is a free group and we have the following theorem
of Namboopripad and Pastijn \cite{NP1}.

\begin{Thm}\label{free}
Every local subgroup of $\GE$ is a free group.
\end{Thm}

\begin{proof}

It follows from Theorem 3 of \cite{HMM} that for each element $e
\in E$ the local subgroup of $\GE$ at $e$ is isomorphic to the
fundamental group of $\textsf{G}$ based at the $\L$-class of $e$.
Since the latter group is free by the discussion above, the
theorem is proved.

\end{proof}

In the case that a connected component of $\GE$ has a finite
number of idempotents, the rank of the free group will be the
Euler characteristic of the corresponding component of
$\textsf{G}$, that is, the number of edges of the graph minus the
number of vertices plus 1. Thus if the connected component of $e
\in E$ of $\GE$ has $m$ $\R$-classes, $n$ $\L$-classes and $k$
idempotents, then the free group $\GE(e,e)$ has rank $k-(m+n)+1$.

All the calculations of maximal subgroups of $RIG(E)$ or $IG(E)$
that have appeared in the literature \cite{McElw, NP1, Past2} have
been restricted to cases of biordered sets that have no
non-degenerate singular squares. In this case it follows from
Theorem \ref{N(E)} that $\GE$ is isomorphic to $\NE$. Since the
local groups of $\NE$ are isomorphic to the maximal subgroups of
$RIG(E)$ we have the following result of Nambooripad and Pastijn
\cite{NP1}.

\begin{Thm}\label{nosing}

If $E$ is a biordered set that has no non-degenerate singular
squares, then every subgroup of $RIG(E)$ is free.

\end{Thm}

Nambooripad and Pastijn's proof of theorem \ref{nosing} uses
combinatorial word arguments. A topological proof of theorem
\ref{nosing} in the special case that the (not necessarily
regular) biordered set has no nontrivial biorder ideals was given
by McElwee \cite{McElw}. The graph that McElwee uses is the same
as ours in this case, but without reference to the general work of
\cite{HMM} or the connection with the Graham-Houghton graph
\cite{Gr, Hough} that we discuss below. There are a number of
interesting classes of regular semigroups whose biordered sets
have no non-degenerate singular squares including locally inverse
semigroups. See \cite{NP1} for more examples.

Connected components of the graph $\textsf{G}$ associated to $\GE$
defined above have arisen in the literature in connection with the
theory of finite 0-simple semigroups and in particular with the
theory of idempotent generated subsemigroups of finite 0-simple
semigroups. Finite idempotent generated 0-simple semigroups have the
property that all non-zero idempotents are connected by an
$E$-chain. This follows from the Clifford-Miller theorem \cite{CP}.
Thus the graph $\textsf{G}$ corresponding to the biordered set of a
finite 0-simple semigroup has a trivial component consisting of 0
and one other connected component. The graph defined independently
by Graham and Houghton \cite{Gr, Hough} associated to a finite
0-simple semigroup is exactly the graph that arises from Bass-Serre
theory associated to $\GE$ that we have defined above. Graham and
Houghton did not note the connection to Bass-Serre theory. A number
of papers have given connections between completely 0-simple
semigroups, the theory of graphs and algebraic topology \cite{Gr,
Hough}, \cite{Pollatch}. The monograph \cite{qtheory} gives an
updated version of these connections.

We now add 2-cells to $\textsf{G}$ of the graph associated to $\GE$,
one for each singular square $\SSQ$. Given this square and recalling
that the positive edges of $\textsf{G}$ are directed from the
$\L$-class of an idempotent to its $\R$-class we sew a 2-cell onto
$\textsf{G}$ with boundary $ef^{-1}gh^{-1}$. We call this 2-complex
the Graham-Houghton complex of $E$ and denote it by $GH(E)$.

We note two important properties of $GH(E)$. Its 1-skeleton is
naturally bipartite as each edge runs between an $\L$-class and an
$\R$-class. Furthermore $GH(E)$ is a square complex in that each of
its cells is a square bounded by a 4-cycle.

We now prove that the fundamental group of the connected component
of $GH(E)$ containing the vertex $\L_{e}$ of an idempotent $e\in
E$ is isomorphic to the fundamental group of the Nambooripad
complex $K(E)$ containing the vertex $e$. We will then be able to
use $GH(E)$ to compute the maximal subgroups of $RIG(E)$.



As we have seen above, the Nambooripad complex $K(E)$ has vertices
$E$, the idempotents of $S$, edges $(e,f)$ whenever $e\R f$ or
$e\L f$, and two types of two cells: one triangular 2-cell
$(e,f)(f,g)(g,e)$ for each unordered triple $(e,f,g)$ of distinct
elements satisfying $e\R f\R g$ or $e\L f\L g$, and one square
2-cell $(e,f)(f,g)(g,h)(h,e)$ for each non-degenerate singular
$E$-square $\SSQ$.

The Graham-Houghton complex $GH(E)$ has one vertex for each $\R$
or $\L$-class of $E$, an edge labelled by $e\in E$ between $\R_a$
and $\L_b$ if $e\in\R_a\cap\L_b$ (giving a bipartite graph), and
square 2-cells attached along $(e,f,g,h)$ when $\SSQ$ is a
non-degenerate singular $E$-square.

We now describe a sequence of transformations of complexes which
starts with $GH(E)$ and ends with $K(E)$. Each step, we shall see,
does not change the isomorphism class of the fundamental groups of
the complex. This will imply that $GH(E)$ and $K(E)$ have
isomorphic fundamental groups. The basic idea is that the vertices
of $K(E)$ are the edges of $GH(E)$, and the vertices of $GH(E)$
are, in some sense, the edges of $K(E)$. The process basically
``blows up'' the vertices of $GH(E)$ to introduce the edges of
$K(E)$, and then crushes the original edges of $GH(E)$ to points
to create the vertices of $K(E)$. The blow-up process introduces
the triangular 2-cells needed for $K(E)$, and the crushing process
turns the square 2-cells of $GH(E)$ into the square 2-cells of
$K(E)$. All of the topological facts used below may be found, for
example, in \cite{Hatcher, Spanier}. More precisely, in the
theorem below, we prove that $K(E)$ is the 2-skeleton of a complex
that is homotopy equivalent to $GH(E)$ and in particular, they
have isomorphic fundamental groups at each vertex.

\begin{Thm} $\pi_1(K(E),e)$ is isomorphic to $\pi_1(GH(E),\L_{e})$
for each $e \in E$.
\end{Thm}

\begin{proof}

The first step is to blow up each vertex $R$ or $L$ of $GH(E)$ to an
$n$-simplex, where $n$ is the valence of the vertex. Figure
\ref{Mark1} shows the essential details. The basic idea is that the
vertex $R$ or $L$ becomes the $n$-simplex, each edge of $GH(E)$
incident to $R$ or $L$ becomes an edge incident to a distinct vertex
of the $n$-simplex, and any square 2-cell incident to the vertex
receives an added edge of the $n$-simplex in its boundary, joining
the two vertices which its original pair of edges are now incident
to. Carrying out this process for all of the original vertices
results in a complex which we will call $Q_1$. Note that $Q_1$ is
homotopy equivalent to $GH(E)$, since $GH(E)$ may be obtained from
$Q_1$ by crushing each $n$-simplex $\sigma^n$ to a point (literally,
taking the quotient complex $Q_1/\sigma^n$). Since each $n$-simplex
is a contractible subcomplex of $Q_1$, the quotient map
$Q_1\rightarrow Q_1/\sigma^n$ is a homotopy equivalence
(\cite{Hatcher} Proposition 0.17); the result then follows by
induction, since $Q_1$ with every one of the introduced simplices
crushed to points is isomorphic to $GH(E)$. The original square
2-cells of $GH(E)$ have now become octagons in $Q_1$.

\begin{figure}\label{Mark1}

\includegraphics[width=0.7\textwidth]{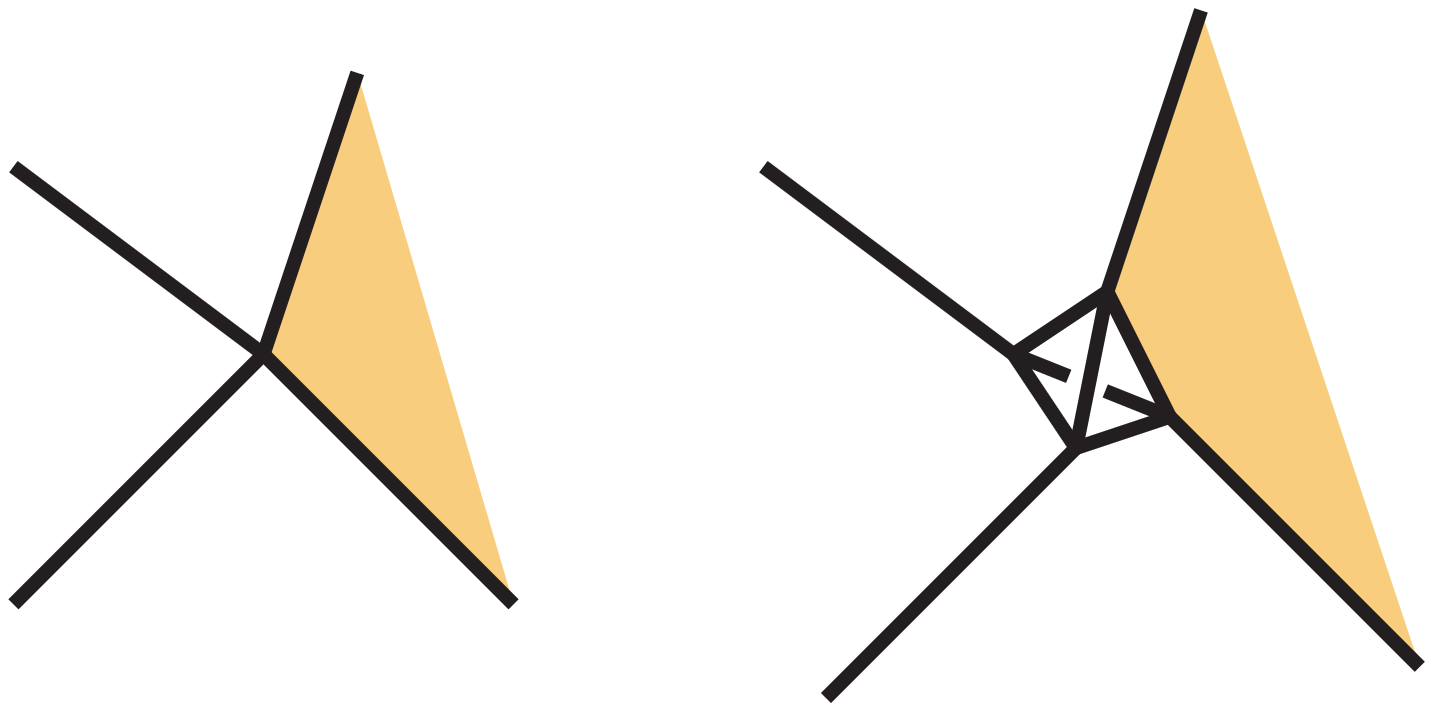}

\caption{}

\label{Mark1}

\end{figure}
\medskip

The complex $Q_1$ has a pair of vertices for each original edge of
$GH(E)$, that is, for each element $e\in E$. One of the vertices
lies in the 2-skeleton of the $n$-simplex corresponding to the
$\L$-class of $e$, and the other in the corresponding $\R$-class.
Our second step is to crush each of these original edges from
$GH(E)$ to points, resulting in a complex which we will call
$Q_2$; see Figure \ref{Mark2}. Each such edge forms a contractible
subcomplex of $Q_1$, since its vertices are distinct - the
1-skeleton of $GH(E)$ is a bipartite graph, so the vertices of
each edge lie on distinct $n$-simplices - so quotienting out by
each edge is again a homotopy equivalence. $Q_2$ is therefore
homotopy equivalent to $Q_1$. The vertices of $Q_2$ are now in
1-to-1 correspondence with $E$, since there is one vertex for each
edge in $GH(E)$. The edges of $Q_2$ are precisely the edges in the
$n$-simplices, so there is an edge from $e$ to $f$ precisely when
$e$ and $f$ lie in the same $\L$- or $\R$-class, which are
precisely the edges of the Nambooripad complex. Under the quotient
map the octagonal 2-cells of $Q_1$ have become square 2-cells,
whose boundaries are edge paths through the vertices $e,f,g,h$
given by the edges in the boundaries of the square 2-cells of
$GH(E)$. That is, they are precisely the singular $E$-squares of
the Nambooripad complex.

\leavevmode

\begin{figure} \label{Mark2}

\includegraphics[width=0.7\textwidth]{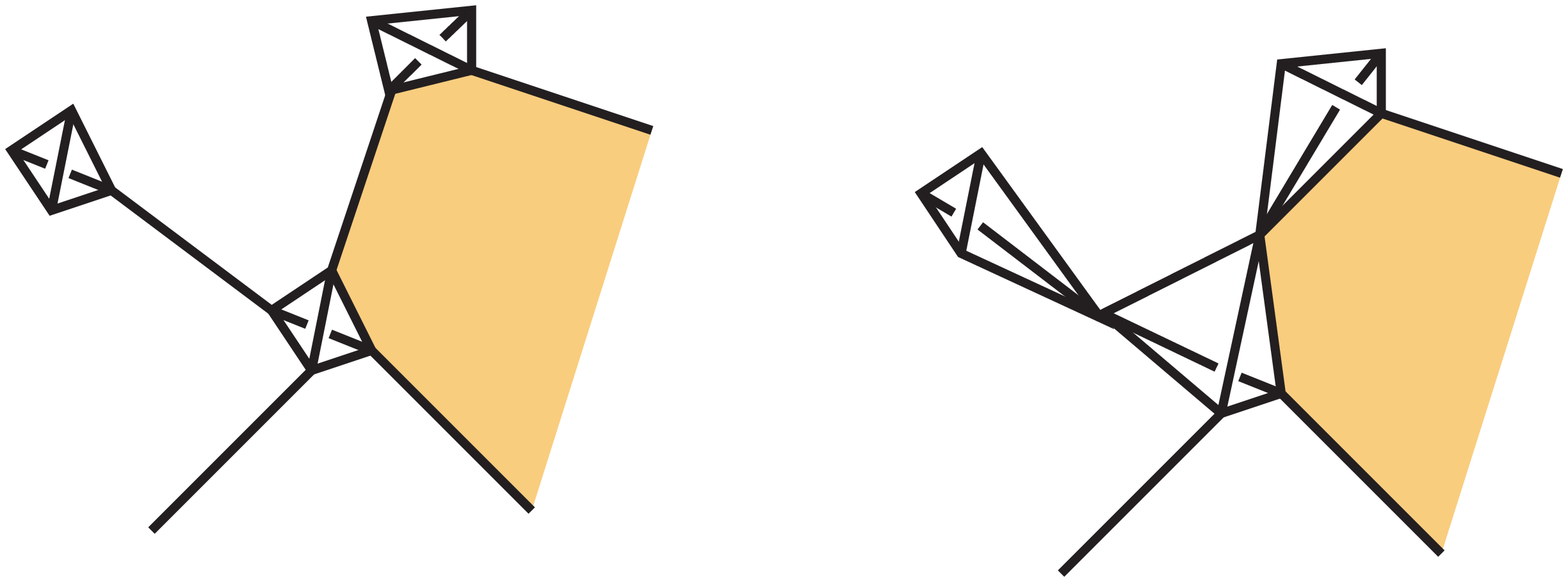}

\caption{}

\label{Mark2}
\end{figure}

Finally, the Nambooripad complex $K(E)$ is isomorphic to the
2-skeleton $Q_2^{(2)}\subseteq Q_2$ of $Q_2$. That is, $Q_2^{(2)}$
consists of the 1-skeleton, which is the 1-skeleton of $K(E)$,
together with the singular squares and all of the 2-faces of the
$n$-simplices, which are precisely the triangular 2-cells  of
$K(E)$ for $e,f,g$ three distinct elements in the same $\L$- or
$\R$-class. Having the same vertices, edges, and 2-cells, the two
2-complexes are therefore isomorphic.

Since the fundamental groupoid of the 2-skeleton of a complex is
isomorphic to the fundamental groupoid of the complex, we have

$\pi_1(K(E))\cong \pi_1(Q_2^{(2)})\cong \pi_1(Q_2)\cong
\pi_1(Q_1)\cong\pi_1(GH(E))$,

\noindent as desired.

\end{proof}

\section{An example of a free idempotent generated semigroup with
non-free subgroups}

In this section we present an example of a finite regular
biordered set $E$ such that $Z \times Z$, the free Abelian group
of rank 2, is isomorphic to a maximal subgroup of $RIG(E)$. This
is the first example of a subgroup of a free idempotent generated
semigroup that is not a free group.

Before presenting the example, we give more details on the
connection between bipartite graphs and completely 0-simple
semigroups. This will help us explain how we present our example.

Let $S=M^{0}(A,1,B,C)$ be a combinatorial completely 0-simple
semigroup. That is, the maximal subgroup is the trivial group 1.
Thus we can represent elements as pairs $(a,b) \in A \times B$ with
product $(a,b)(a',b')=(a,b')$ if $C(b,a')\neq 0$ and 0 otherwise. As
in the general case of the Graham-Houghton graph that we described
in the previous section, we associate a bipartite graph $\Gamma(S)$
to $S$. The vertices of $\Gamma(S)$ are $A \cup B$ (where as usual,
we assume $A \cap B$ is empty). There is an edge between $b \in B$
and $a \in A$ if and only if $C(b,a)=1$. Clearly $\Gamma(S)$ is a
bipartite graph with no isolated vertices.

Conversely, let $\Gamma$ be a bipartite graph with vertices the
disjoint union of two sets $A$ and $B$ and no isolated vertices.
We then have the incidence matrix $C=C(\Gamma):B \times A
\rightarrow \{0,1\}$ with $C(b,a)=1$ if and only if $\{b,a\}$ is
an edge of $\Gamma$. As usual we write $C$ as a $\{0,1\}$ matrix
with rows labelled by elements of $B$ and columns labelled by
elements of $A$. Define $S(\Gamma)$ to be the Rees matrix
semigroup $S(\Gamma)=M^{0}(A,1,B,C(\Gamma))$. Then it follows from
the fact that $\Gamma$ has no isolated vertices that $S(\Gamma)$
is a combinatorial 0-simple semigroup. Clearly, these assignments
give a one to one correspondence between combinatorial 0-simple
semigroups and directed bipartite graphs with no isolated
vertices. Isomorphisms of graphs are easily seen to correspond to
isomorphisms of the corresponding semigroup and vice versa.

We now explain the idea of our example. We will define a bipartite
graph $\Gamma$ that embeds on the surface of a torus. The graph will
represent the one skeleton of a square complex. We will then define
a finite regular semigroup $S$ that has $\Gamma$ as the bipartite
graph corresponding to a completely 0-simple semigroup that is an
ideal of $S$ and such that if we add the singular squares of the
biordered set $E(S)$ as 2-cells to $\Gamma$ (in the language of the
previous section, we build the Graham-Houghton complex), we obtain a
complex that has the fundamental group of the torus, that is, $Z
\times Z$ as maximal subgroup.

We begin by drawing the graph $\Gamma$ in Figure \ref{Gamma}.

\begin{figure}[!h]\label{Gamma}
\psfrag{l1 }[][]{$L_1$}
\psfrag{l2 }[][]{$L_2$}
\psfrag{l3 }[][]{$L_3$}
\psfrag{l4 }[][]{$L_4$}
\psfrag{l5 }[][]{$L_5$}
\psfrag{l6 }[][]{$L_6$}
\psfrag{l7 }[][]{$L_7$}
\psfrag{l8 }[][]{$L_8$}
%
%
\psfrag{r1 }[][]{$R_1$}
\psfrag{r2 }[][]{$R_2$}
\psfrag{r3 }[][]{$R_3$}
\psfrag{r4 }[][]{$R_4$}
\psfrag{r5 }[][]{$R_5$}
\psfrag{r6 }[][]{$R_6$}
\psfrag{r7 }[][]{$R_7$}
\psfrag{r8 }[][]{$R_8$}
\includegraphics[width=0.7\textwidth]{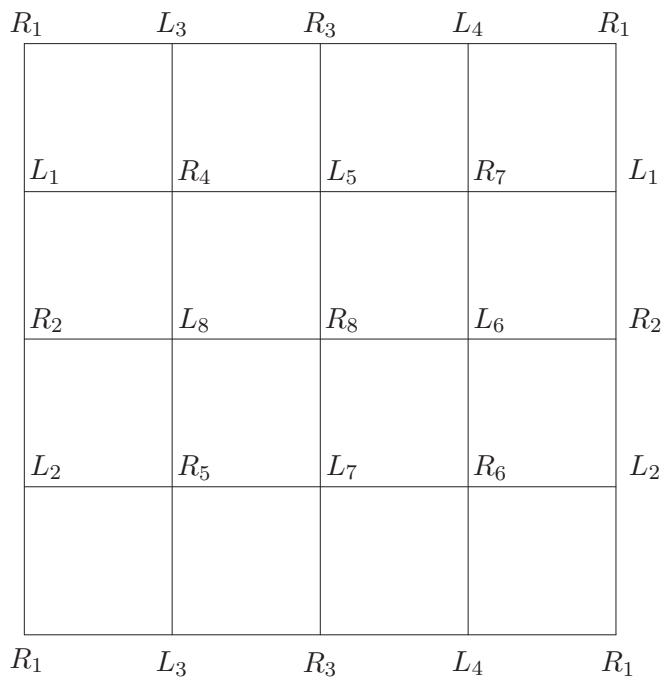}
\caption{The graph $\Gamma$}

\label{Gamma}
%
\end{figure}

We call the colors of the bipartition $R$ and $L$ to remind the
reader of the Green relations $\R$ and $\L$ (but if the reader
insists, s/he can think of them as Red and bLue). Thus there are
16 vertices in the graph and 32 edges. Figure \ref{Gamma} is drawn
in a way that the graph is really drawn on the torus obtained by
identifying the top of the graph with the bottom and the left side
with the right side.

Before continuing we define the incidence matrix of $\Gamma$. For
our purposes, it is more convenient to write the transpose of the
incidence matrix. Thus the matrix in figure \ref{incidence} has
rows labelled by $R_{1}, \ldots, R_{8}$ and columns labelled by
$L_{1}, \ldots, L_{8}$. In particular, the matrix written this way
defines the biordered set of the 0-simple semigroup $S(\Gamma)$
corresponding to $\Gamma$. That is, idempotents correspond to the
$\H$ classes with entries 1, the $\R$ relation corresponds to
being idempotents in the same row and the $\L$ relation
corresponds to being idempotents in the same column.

\begin{figure} \label{incidence}

$\left[%
\begin{array}{cccccccc}
  1 & 1 & 1 & 1 & 0 & 0 & 0 & 0 \\
  1 & 1 & 0 & 0 & 0 & 1 & 0 & 1 \\
  0 & 0 & 1 & 1 & 1 & 0 & 1 & 0 \\
  1 & 0 & 1 & 0 & 1 & 0 & 0 & 1 \\
  0 & 1 & 1 & 0 & 0 & 0 & 1 & 1 \\
  0 & 1 & 0 & 1 & 0 & 1 & 1 & 0 \\
  1 & 0 & 0 & 1 & 1 & 1 & 0 & 0 \\
  0 & 0 & 0 & 0 & 1 & 1 & 1 & 1 \\
\end{array}%
\right]$

\caption{The transpose of the incidence matrix of the graph
$\Gamma$}

\label{incidence}
\end{figure}

Now consider the 2-complex one obtains by sewing on 2-cells
corresponding to the 16 visual 1 by 1 squares that we see in the
diagram of $\Gamma$. Notice that after identifying the graph on the
surface of a torus, there are 24 4-cycles in the graph. There are
the 16 4-cycles bounding 2-cells in our complex (such as
$R_{1},L_{3},R_{4},L_{1}$) that we see in figure \ref{Gamma}: there
are also the 8 4-cycles (such as $R_{1},L_{3},R_{3},L_{4}$) that are
obtained when we fold $\Gamma$ into a torus, but these 4-cells do
not bound cells in our complex. Clearly the fundamental group of
this complex is $Z \times Z$. We have simply drawn subsquares on the
usual representation of the torus as a square with opposite sides
identified. By killing off these corresponding 16 4-cycles we have a
space homeomorphic to the torus and thus its fundamental group is $Z
\times Z$.

Furthermore, each of the 16 visual 1 by 1 squares in the diagram
of the graph $\Gamma$ corresponds to an $E$-square in the
biordered set of the 0-simple semigroup $S(\Gamma)$ corresponding
to $\Gamma$. Thus if we can find a regular semigroup $S$ that has
the biordered set corresponding to $S(\Gamma)$ as a connected
component and also has exactly the 16 visible squares as the
singular squares in this component, it follows from the results of
the previous section that the maximal subgroup of the connected
component corresponding to $\Gamma$ in $RIG(E(S))$ is $Z \times
Z$. We proceed to construct such a regular semigroup.

Let $X=\{L_{1}, \ldots, L_{8}\}$. The semigroup $S$ will be
defined as a subsemigroup of the monoid of partial functions
acting on the right of $X$. Let $C$ be the transpose of the matrix
in figure \ref{incidence}. Thus $C$ is the structure matrix of the
0-simple semigroup $S(\Gamma)$. To each element $s=(R_{i},L_{j})
\in S(\Gamma)$ we associate the partial constant function $f_s:X
\rightarrow X$ defined by $L_{x}f_{s}=L_{j}$ if $C(L_{x},R_{i})=1$
and undefined otherwise. In the language of semigroup theory,
$f_{s}$ is the image of $s$ under the right Schutzenberger
representation of $S(\Gamma)$ \cite{Arbib, qtheory}.

The semigroup generated by $\{f_{s}|s \in S(\Gamma)\}$ is isomorphic
to $S(\Gamma)$. This can  be verified by direct computation by
showing that for all $s,t \in S(\Gamma)$, $f_{s}f_{t}=f_{st}$,
(where $st$ is the product of $s$ and $t$ in $S(\Gamma)$) and that
the assignment $s \mapsto f_{s}$ is one to one. This follows
directly from the definition of $f_s$ above. Alternatively, one can
verify this by noting as we did above that the assignment of $s$ to
$f_{s}$ is the right Schutzenberger representation. The structure
matrix of $S(\Gamma)$, that is, the transpose of the matrix in
figure \ref{incidence}, has no repeated rows and columns and this
implies that both the right and left Schutzenberger representations
are faithful \cite{Arbib, qtheory}.

Now we define two more functions $e,k$ by the following tables.

\begin{center}
$e=\left[%
\begin{array}{cccccccc}
  L_{1} & L_{2} & L_{3} & L_{4} & L_{5} & L_{6} & L_{7} & L_{8} \\
  L_{1} & L_{6} & L_{3} & L_{7} & L_{3} & L_{6} & L_{7} & L_{1} \\
\end{array}%
\right]$

\medskip

$k=\left[%
\begin{array}{cccccccc}
  L_{1} & L_{2} & L_{3} & L_{4} & L_{5} & L_{6} & L_{7} & L_{8} \\
  L_{4} & L_{2} & L_{2} & L_{4} & L_{5} & L_{5} & L_{8} & L_{8} \\
\end{array}%
\right]$
\end{center}

Let $S$ be the semigroup generated by $\{e,k,f_{s}|s \in
S(\Gamma)\}$. We claim that $S$ is the semigroup that has the
properties we desire. Notice that $e$ and $k$ are idempotents and
that $S(\Gamma)$ is generated by its idempotents (this is known to
be equivalent to the graph $\Gamma$ being connected \cite{Gr,
Namb}), so in fact, $S$ is an idempotent generated semigroup.

The subsemigroup $T$ generated by $\{e,k\}$ has by direct
computation 8 elements
$\{e,k,(ek),(ke),(eke),(kek),h=(ek)^{2},f=(ke)^{2}\}$. This
semigroup consists of functions all of rank 4 and is a completely
simple semigroup whose idempotents are $e,f,k,h$. We claim that
$TS(\Gamma) \cup S(\Gamma)T \subseteq S(\Gamma)$. To see this we
first note that for $(R_{i},L_{j}) \in S(\Gamma)$, we have
$(R_{i},L_{j})t=(R_{i},L_{j}t)$ for $t \in \{e,k\}$. Therefore
$S(\Gamma)T \subseteq S(\Gamma)$ follows by induction on the length
of a product of elements in $\{e,k\}$.

We now list how $e$ and $k$ act on the left of $S(\Gamma)$. In the
charts below, we note, for $t \in \{e,k\}$ and $(R_{i},L_{j}) \in
S(\Gamma)$, that $t(R_{i},L_{j})=(tR_{i},L_{j})$ for the left action
$R_{i} \mapsto tR_{i}$ listed here. Again, all of this can be
verified by direct computation.

\begin{center}
$e:\left[%
\begin{array}{cccccccc}
 R_{1} & R_{2} & R_{3} & R_{4} & R_{5} & R_{6} & R_{7} & R_{8} \\
 R_{4} & R_{2} & R_{3} & R_{4} & R_{3} & R_{6} & R_{2} & R_{6} \\
\end{array}%
\right]$

\medskip

$k:\left[%
\begin{array}{cccccccc}
  R_{1} & R_{2} & R_{3} & R_{4} & R_{5} & R_{6} & R_{7} & R_{8} \\
  R_{1} & R_{5} & R_{7} & R_{8} & R_{5} & R_{1} & R_{7} & R_{8} \\
\end{array}%
\right]$
\end{center}

For readers who know the terminology, we have listed the images of
$T$ in the left Schutzenberger representation on $S(\Gamma)$
\cite{Arbib, qtheory}. Our claim that $TS(\Gamma) \cup S(\Gamma)T
\subseteq S(\Gamma)$ follows from these charts by induction on the
length of a product from $T$. It follows that $S$ is the disjoint
union of $T$ and $S(\Gamma)$. Thus $S$ is a regular semigroup with
3 $\J$ classes- one of them being $T$ and the other 2 coming from
$S(\Gamma)$ (its unique non-zero $\J$-class and 0). $S(\Gamma)$ is
the unique 0-minimal ideal of $S$. The order of $S$ is 73 and the
order of $E(S)$ is 37.

We now look at the biorder structure on $E(S)$. We summarize the
usual idempotent order relation in figure \ref{order}.

\begin{figure}[!h]\label{order}
\psfrag{l1 }[][]{$L_1$}
\psfrag{l2 }[][]{$L_2$}
\psfrag{l3 }[][]{$L_3$}
\psfrag{l4 }[][]{$L_4$}
\psfrag{l5 }[][]{$L_5$}
\psfrag{l6 }[][]{$L_6$}
\psfrag{l7 }[][]{$L_7$}
\psfrag{l8 }[][]{$L_8$}
%
%
\psfrag{r1 }[][]{$R_1$}
\psfrag{r2 }[][]{$R_2$}
\psfrag{r3 }[][]{$R_3$}
\psfrag{r4 }[][]{$R_4$}
\psfrag{r5 }[][]{$R_5$}
\psfrag{r6 }[][]{$R_6$}
\psfrag{r7 }[][]{$R_7$}
\psfrag{r8 }[][]{$R_8$}
\includegraphics[width=0.7\textwidth]{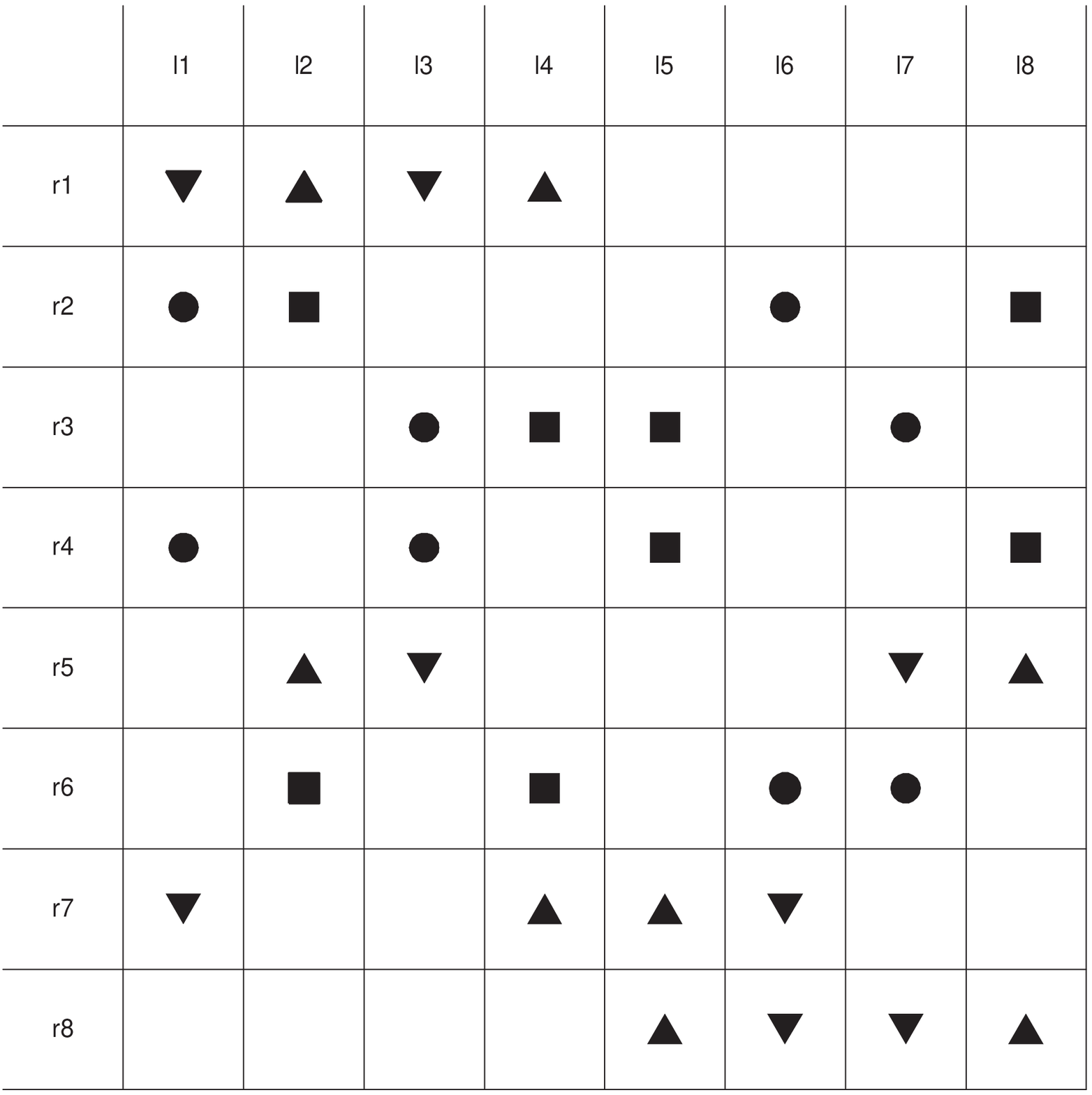}
\caption{The idempotent order on $E(S)$}

\label{order}
\end{figure}
We explain the symbols in this diagram. Each symbol represents an
idempotent in $T$ according to figure \ref{explanation}.
\begin{figure}\label{explanation}
\begin{center}
\begin{tabular}{|c|c|}
  \hline
  Idempotent & Symbol \\
  \hline
  $h$ & $\Box$ \\
  \hline
  $e$ & $\bullet$ \\
  \hline
  $k$ & $\triangle$ \\
  \hline
  $f$ & $\nabla$ \\
  \hline
\end{tabular}
\end{center}
\caption{} \label{explanation}
\end{figure}

An entry of a symbol in a box in figure \ref{order} denotes a
relation in the usual idempotent order. For example, the idempotent
$(R_{1},L_{1})$ of $S(\Gamma)$ is below $f$ in the idempotent order.
For example it follows from the diagram that $(R_{2},L_{1})\lb f$
but that $(R_{2},L_{1})$ is not below $f$ in the idempotent order.
The other relations in the regular biordered set $E(S)$ can be
computed directly in $S$. For example, $f(R_{2},L_{1}) =
(R_{7},L_{1}), k(R_{2},L_{2}) = (R_{5},L_{2})$, etc.

The partial order on $E(S)$ has many pleasant properties. For
example, each of the idempotents in $T$ is above exactly 8
idempotents in $S(\Gamma)$ and every idempotent in $S(\Gamma)$ is
below exactly one idempotent in $T$. The 8 idempotents in
$S(\Gamma)$ below a given idempotent in $T$ form an $E$-cycle.
Thus the idempotents in $S(\Gamma)$ decompose into the disjoint
union of 4 $E$-cycles of length 8. Below we give a more geometric
definition of the semigroup $S$ which will help explain some of
these properties.

Finally, in figure \ref{singularization}, we give the precise
information on which idempotents in $T$ singularize squares in
$E(S(\Gamma))$. Again, all of this can be verified by direct
computation.

\begin{figure}[!h]\label{singularization}
\psfrag{l1 }[][]{$L_1$}
\psfrag{l2 }[][]{$L_2$}
\psfrag{l3 }[][]{$L_3$}
\psfrag{l4 }[][]{$L_4$}
\psfrag{l5 }[][]{$L_5$}
\psfrag{l6 }[][]{$L_6$}
\psfrag{l7 }[][]{$L_7$}
\psfrag{l8 }[][]{$L_8$}
%
%
\psfrag{r1 }[][]{$R_1$}
\psfrag{r2 }[][]{$R_2$}
\psfrag{r3 }[][]{$R_3$}
\psfrag{r4 }[][]{$R_4$}
\psfrag{r5 }[][]{$R_5$}
\psfrag{r6 }[][]{$R_6$}
\psfrag{r7 }[][]{$R_7$}
\psfrag{r8 }[][]{$R_8$}
\includegraphics[width=0.7\textwidth]{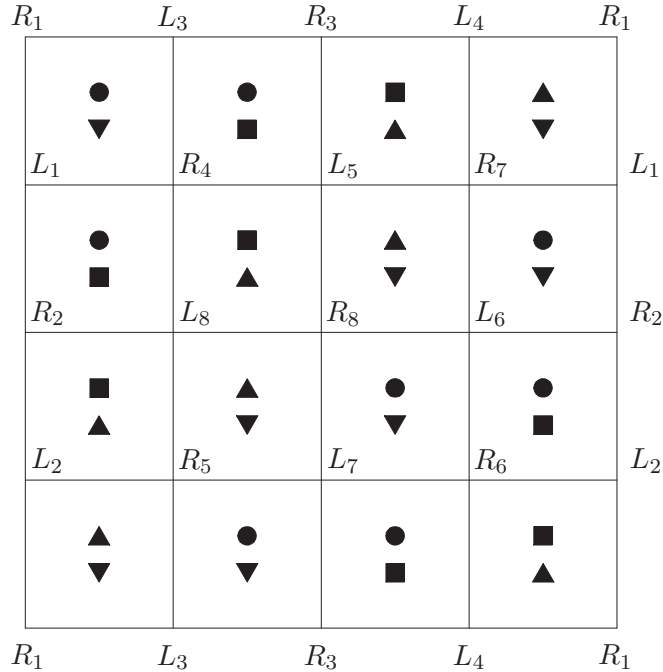}

\caption{Singularization of $E$-squares}

\label{singularization}
\end{figure}

The explanation of figure \ref{singularization} is as follows. An
entry in a square of the symbol of an idempotent from $T$
indicates that that idempotent singularizes the corresponding $2
\times 2$ rectangular set in $E(S)$. For example, the square,
$\left[%
\begin{array}{cc}
  (R_{1},L_{1}) & (R_{1},L_{3}) \\
  (R_{4},L_{1}) & (R_{4},L_{3}) \\
\end{array}%
\right]$, which is the square represented in the top left portion
of figure \ref{singularization} is singularized (bottom to top) by
$f$ and (top to bottom) by $e$. The diligent reader can verify all
that we claim by direct computation in $E(S)$. In particular,
exactly the 16 squares that we desire to be singularized in
$S(\Gamma)$ are the ones singularized in $S$ and therefore the
free (regular) idempotent semigroup on the biordered set $E(S)$
has $Z \times Z$ as a maximal subgroup for the connected component
corresponding to $\Gamma$ as explained at the beginning of this
section. This completes our first description of $S$. We now give
a more geometric description of the semigroup $S$.

\subsection{Incidence structures and
affine geometry over $Z_2$}

In this subsection we show that the semigroup $S$ discussed above
arises from a combinatorial structure related to affine 3-space
over $Z_2$. We first recall some connections between incidence
structures in the sense of combinatorics and finite 0-simple
semigroups.

Up to now, we have used the tight connection between bipartite
graphs and 0-simple semigroups over the trivial group to build our
example. As is well known, $\{0,1\}$-matrices arise naturally to
code information about other combinatorial structures besides
bipartite graphs.

An {\em incidence system} is a pair $D=(V,\mathcal{B})$ where $V$ is
a (usually finite) set of points and $\mathcal{B}$ is a list of
subsets of $V$ called {\em blocks}. We allow for the possibility
that a block, that is a certain subset of $V$, can appear more than
once in the list $\mathcal{B}$. The {\em incidence matrix} of $D$ is
the $|\mathcal{B}| \times |V|$ matrix $I_D$ (we will use the
elements of $\mathcal{B}$ and $V$ to name rows and columns) such
that $I_{D}(b,v) = 1$ if $v \in b$ and 0 otherwise, where $b \in
\mathcal{B}$ and $v \in V$. Sometimes, the transpose of this matrix
is called the incidence matrix, but it is more convenient for our
purposes to define things this way.

The semigroup $S(D)$ associated with $D$ is the Rees matrix
semigroup $M^{0}(\mathcal{B},1,V,C)$ where $C$ is the transpose of
$I_{D}$. It is straightforward to see that $S(D)$ is 0-simple if
and only if the empty set is not a block and every point belongs
to some block. We make these assumptions throughout. Conversely,
it is easy to see that the transpose of the structure matrix of a
combinatorial completely 0-simple semigroup is an incidence system
with these two properties.

For example, if we consider the matrix in figure \ref{incidence}
as an incidence system, the points are $\{L_{1},\ldots,L_{8}\}$.
The blocks are
$R_{1}=\{L_{1},L_{2},L_{3},L_{4}\},R_{2}=\{L_{1},L_{2},L_{6},L_{8}\}$,etc.

Now we show that this incidence system can be coordinatized as a
certain affine configuration over the field of order 2 and that the
semigroup $S$ can be faithfully represented by affine partial
functions that are ``continuous" with respect to this structure in
the sense of \cite{DM1, DM2, DM3}.

Let $F_2$ be the field of order 2 and let $V=F_2^{3}$ be 3-space
over $F_{2}$. Consider the set of planes through the origin (i.e.
2 dimensional subspaces of $V$) that do not contain the vector
$(1,1,1)$.  An elementary counting argument shows that there are 4
such planes. We let $\mathcal{B}$ be the set of these 4 planes
plus their 4 translates by the vector (1,1,1). Therefore,
$\mathcal{B}$ has 8 elements. We claim that by suitably ordering
the points in $V$ and the planes in $\mathcal{B}$, the incidence
matrix of $(V,\mathcal{B})$ is the matrix in figure
\ref{incidence}. We do this by making the assignment of vectors to
the points $L_{1}, \ldots , L_{8}$ according to figure
\ref{points}.

\medskip
\begin{figure}\label{points}
\begin{center}
\begin{tabular}{|c|c|}

  \hline
  $L_{1}$ & (0,0,0) \\
  \hline
   $L_{2}$ & (1,0,0) \\
  \hline
  $L_{3}$ & (0,1,0) \\
  \hline
  $L_{4}$ & (1,1,0) \\
  \hline
  $L_{5}$ & (0,1,1) \\
  \hline
  $L_{6}$ & (1,0,1) \\
  \hline
  $L_{7}$ & (1,1,1) \\
  \hline
  $L_{8}$ & (0,0,1) \\
  \hline
\end{tabular}
\end{center}

\caption{The points of the structure}

\label{points}
\end{figure}
\medskip

With this identification of the $L_{i}$ as vectors in $V$, we have
the following way to identify the blocks of our structure. For
simplicity of presentation, we write $i$ in place of $L_{i}$ in
figure \ref{blocks}.

\medskip

\begin{figure}\label{blocks}
\begin{center}
\begin{tabular}{|c|c|c|}
  \hline
  Row & Block & Subset of $V$ \\
  \hline
  $R_1$ & $\{1,2,3,4\}$ & $\{(0,0,0),(1,0,0),(0,1,0),(1,1,0)\}$ \\
  \hline
  $R_2$ & $\{1,2,6,8\}$ & $\{(0,0,0),(1,0,0),(1,0,1),(0,0,1)\}$ \\
  \hline
  $R_3$ & $\{3,4,5,7\}$ & $\{(0,1,0),(1,1,0),(0,1,1),(1,1,1)\}$ \\
  \hline
  $R_4$ & $\{1,3,5,8\}$ & $\{(0,0,0),(0,1,0),(0,1,1),(0,0,1)\}$ \\
  \hline
  $R_5$ & $\{2,3,7,8\}$ & $\{(1,0,0),(0,1,0),(1,1,1),(0,0,1)\}$ \\
  \hline
  $R_6$ & $\{2,4,6,7\}$ & $\{(1,0,0),(1,1,0),(1,0,1),(1,1,1)\}$ \\
  \hline
  $R_7$ & $\{1,4,5,6\}$ & $\{(0,0,0),(1,1,0),(0,1,1),(1,0,1)\}$ \\
  \hline
  $R_8$ & $\{5,6,7,8\}$ & $\{(0,1,1),(1,0,1),(1,1,1),(0,0,1)\}$ \\
  \hline
\end{tabular}
\end{center}

\caption{The blocks of the structure}

\label{blocks}
\end{figure}
\medskip

We can see from the preceding table that $R_{1},R_{2},R_{4},R_{7}$
are precisely the 4 planes through the origin in $V$ that do not
contain the vector $(1,1,1)$ and that
$R_{3}=R_{2}+(1,1,1),R_{5}=R_{7}+(1,1,1),R_{6}=R_{4}+(1,1,1),R_{8}=R_{1}+(1,1,1)$
are their translates.

Now we show that the semigroup $S$ defined in the previous
subsection also has a natural interpretation with respect to this
geometric structure. Let $V$ be a vector space over an arbitrary
field. An affine partial function on $V$ is a partial function
$f_{A,w}:V \rightarrow V$ of the form $vf=vA+w$,where $A:V
\rightarrow V$ is a partial linear transformation, that is a
linear transformation whose domain is an affine subspace of $V$
and range an affine subspace of $V$ and $w \in V$. The collection
of all affine partial functions is a monoid $Aff(V)$. If we
identify $f_{A,w}$ with the pair $(A,w)$, then multiplication in
$Aff(V)$ takes the form $(A,w)(A',w')=(AA',wA'+w')$ so that
$Aff(V)$ is a semidirect product of the monoid of partial linear
transformations on $V$ with the additive group on $V$.

We claim that the idempotents $e$ and $k$ defined in the previous
section in defining our semigroup $S$ act as affine functions on
$F_{2}^3$ using our translation of our structure in this section.
Indeed, let $A=\left[%
\begin{array}{ccc}
  1 & 0 & 1 \\
  0 & 1 & 0 \\
  0 & 0 & 0 \\
\end{array}%
\right]$ considered as a matrix over $F_2$. Then it is easily
checked that for $1 \leq i \leq 8$, $ie=j$ if and only if
$v_{i}A=v_{j}$ where $v_{i}$ is the vector corresponding to
$L_{i}$ in the table above and that if $B=\left[%
\begin{array}{ccc}
  0 & 1 & 0 \\
  0 & 1 & 0 \\
  1 & 1 & 1 \\
\end{array}%
\right]$ and $w=(1,1,0)$, then for $1 \leq i \leq 8$, $ik=j$ if
and only if $v_{i}B + w =v_{j}$. Thus the completely simple
subsemigroup $T$ of our semigroup $S$ is faithfully represented by
affine functions over our geometric structure.

Furthermore, each element of $T$ has the following property with
respect to this structure: the inverse image of each plane in the
structure is also in the structure. For example, $R_{1}e^{-1}=
R_{4}, R_{2}e^{-1}= R_{2}, R_{3}e^{-1}=R_{3}, R_{4}e^{-1}=R_{4},
R_{5}e^{-1}=R_{3}, R_{6}e^{-1}=R_{6}, R_{7}e^{-1}=R_{2},
R_{8}e^{-1}=R_{6}$.

Each element $(R_{i},L_{j})$ is also represented as an affine
partial function, namely the partial function whose domain is
$R_{i}$ and sends all points in its domain to $L_{j}$. We can
represent this as an affine partial function by taking $A$ to be
the 0 linear transformation restricted to $R_{i}$ and $w$ to be
$L_j$. Clearly, the inverse image of a block $R$ under this
function is either $R_{i}$ if $L_{j} \in R$ and the empty set
otherwise.

Notice also, that for every element of $S$ the closure of blocks
under inverse image encodes left multiplication of $e$ in the
biordered set $E(S)$. For example,
$e(R_{1},L_{1})=(R_{1}e^{-1},L_{1})=(R_{4},L_{1})$,
$(R_{1},L_{1})(R_{3},L_{1})=0$, etc.

Thus, there is an analogue of the action of the partial functions
on our structure to continuous functions on a topological space.
If we consider the blocks of our structure to be ``open", then our
functions preserve open sets under inverse image. The notion of
continuous partial functions on combinatorial structures and its
relationship to the semigroup theoretic notion of translational
hull \cite{CP} has been explored in \cite{DM1,DM2,DM3}. We see
here that there is a close connection between building biordered
sets with a specific connected component and the continuous
partial functions on the corresponding 0-simple semigroup. We will
explore this connection in future work.

\section{Summary and future directions}

We have shown how to represent the maximal subgroups of the free
(regular) idempotent generated semigroup on a regular biordered set
by a 2-complex derived from Nambooripad's \cite{Namb} work. By
applying the Bass-Serre techniques of \cite{HMM}, we are directly
lead to the graph defined by Graham and Houghton for finite 0-simple
semigroups \cite{Gr, Hough}. We put a structure of 2-complex on this
graph and use that to construct an example of a finite regular
biordered set that has a maximal subgroup that is isomorphic to the
free abelian group of rank 2. This is the first example of a
non-free group that appears in a free idempotent generated
semigroup.

The biordered set arises from a certain combinatorial structure
defined on a 3 dimensional vector space over the field of order 2.
This suggests looking for further examples by either varying the
field and looking at analogous structures over 3 dimensional
spaces or by looking at higher dimensional analogues of the
structure we have defined.

In related work we have proved, using completely different
techniques, that if $F$ is any field, and $E_{3}(F)$ is the
biordered set of the monoid of $3 \times 3$ matrices over $F$, then
the free idempotent generated semigroup over $E_{3}(F)$ has a
maximal subgroup isomorphic to the multiplicative subgroup of $F$.
In particular, finite cyclic groups of order $p^{n}-1$, $p$ a prime
number appear as maximal subgroups of free idempotent generated
semigroups.

This last example motivates an intended application of this work. We
would like to apply Nambooripad's powerful theory of inductive
groupoids \cite{Namb} to study reductive linear algebraic monoids
\cite{Putchbook}. This very important class of regular monoids and
their finite analogues have been intensively studied over the last
25 years. A basic example is the monoid of all matrices over a
field.

The above discussion begs the question of describing the class of
groups that are maximal subgroup of $IG(E)$ or $RIG(E)$ for a
biordered set $E$. This seems to be a very difficult question at
this time.

\section{Acknowledgements}

The authors would like to thank Noel Brady for discussions about an
earlier version of this work and Ludmilla Epstein-Marcus for her
help in drawing the figures 3-7.

\end{document}